\newcommand{\C}{\mathbb C}
\newcommand{\R}{\mathbb R}
\newcommand{\Q}{\mathbb Q}
\newcommand{\F}{\mathbb F}
\newcommand{\rationals}{\mathbb Q}
\newcommand{\Hy}{\mathcal{H}}
\newcommand{\ringO}{\mathcal{O}}
\newcommand{\mat}{\begin{pmatrix}a & b\\c & d\end{pmatrix}}
\renewcommand{\leq}{\leqslant}
\renewcommand{\geq}{\geqslant}
\renewcommand{\O}{{\mathcal O}}
\newcommand{\gl}{{\rm GL}}
\renewcommand{\sl}{{\rm SL}}
\renewcommand{\dim}{{\rm dim}}
\newtheorem{theorem}{\bfseries Theorem}
\newtheorem{proposition}[theorem]{\bfseries Proposition}
\newtheorem{corollary}[theorem]{\bfseries Corollary}
\newtheorem{lemma}[theorem]{\bfseries Lemma}
\theoremstyle{definition}
\newtheorem{notation}[theorem]{\bfseries Notation}
\newtheorem{definition}[theorem]{\bfseries Definition}
\newtheorem{question}[theorem]{\bfseries Question}
\newtheorem{remark}[theorem]{\bfseries Remark}
\begin{document}

\title{On Level One Cuspidal Bianchi Modular Forms}
\author{Alexander D. Rahm}
\email{Alexander.Rahm@nuigalway.ie}
\urladdr{http://www.maths.nuigalway.ie/\char126rahm/}
\address{National University of Ireland at Galway, Department of Mathematics}
\thanks{The first author is funded by the Irish Research Council.} 
\author{Mehmet Haluk \c{S}eng\"{u}n}
\email{M.H.Sengun@warwick.ac.uk}
\urladdr{http://www.uni-due.de/\char126hm0074/}
\address{Mathematics Institiute, University of Warwick, Coventry, UK}
\thanks{The second author is funded by a Marie Curie Intra-European Fellowship.}
\date{\today}

\begin{abstract}
In this paper, we present the outcome of vast computer calculations, locating several of the very rare instances 
of  level one cuspidal Bianchi modular forms that are not lifts of elliptic modular forms.  
\end{abstract}

\maketitle

Bianchi modular forms over an imaginary quadratic field $K$ are automorphic forms of cohomological type associated to the $\Q$-algebraic group $\textrm{Res}_{K/ \Q}(\sl_2)$. Even though modern studies of Bianchi modular forms go back to the mid 1960's, most of the fundamental problems surrounding their theory are still wide open. In this paper, we report on our remarkably extensive computations that show the paucity of {\em genuine} level one cuspidal Bianchi modular forms.

Let $S_k(1)$ denote the space of level one weight $k+2$ cuspidal Bianchi modular forms over $K=\Q(\sqrt{-d})$. In their recent paper \cite{fgt}, Finis, Grunewald and Tirao computed the dimension of the subspace $L_k(1)$ of $S_k(1)$ which is formed by (twists of) those forms which arise from elliptic cuspidal modular forms via base-change or arise from a quadratic extension of $K$ via automorphic induction (see \cite{fgt} for these notions). In this paper, we investigate 
numerically how much of $S_k(1)$ is exhausted by $L_k(1)$. There have been previous reports, however of limited size, in the 2009 paper 
\cite{calegari-mazur} of Calegari and Mazur (the computations in this paper were carried out by Pollack and Stein) and in the 2010 paper \cite{fgt} of 
Finis, Grunewald and Tirao.  While the computations in \cite{calegari-mazur} were limited to the case $d=2$, the computations in \cite{fgt} 
covered ten imaginary quadratic fields. The precise scope of the computations in \cite{fgt} is given in Table \ref{table:fgt} below.

\begin{table}[h] 
\centering
\begin{tabular}{|c|c|c|c|c|c|c|c|c|c|c|} \hline
$d$&1&2&3&7&11&19&5&6&10&14 \\ \hline
$k \leq$&104&141&116&132&153&60&60&60&60&60 \\ \hline
\multicolumn{11}{c}{}\\ 
\end{tabular}
\caption{Finis-Grunewald-Tirao test range}\label{table:fgt}
\end{table} 

It was observed in \cite{calegari-mazur} that for $2k\leq 96$, one has $L_{2k}(1)=S_{2k}(1)$. The computations of \cite{fgt} extended 
those of \cite{calegari-mazur}. An interesting outcome of the data they collected is that except in two of the 946 spaces they computed, one has 
$L_k(1)=S_k(1)$. The exceptional cases are $(d,k)=(7,12)$ and $(d,k)=(11,10)$. In both cases, there is a two-dimensional complement to $L_k(1)$ inside $S_k(1)$. 

Using a different and more efficient approach, we computed, over more than 800 processor-days, the dimension of 4986 different spaces $S_k(1)$ over 186 different imaginary quadratic fields. The precise scope of our computations is given in Tables \ref{ours1}, \ref{ours1b} and \ref{ours2}, 
where $D$ and $h$ denote the discriminant and the class number of $K$ respectively. In only 29 of these spaces were we able to observe genuine forms. The precise data about these exceptional cases is provided in Table \ref{exceptional}.

\begin{table}[h] 
\centering
\begin{tabular}{|c|c|c|c|c|c|c|c|c|c|c|c|} \hline
$|D|$  &{\bf 3}&{\bf 4}&{\bf 7}&{\bf 8}&{\bf 11}&{\bf 15}&{\bf 19}&{\bf 20}&{\bf 23}&{\bf 24}&{\bf 31}   \\ \hline
$h$ &1&1&1&1&1&2&1&2&3&2&3 \\ \hline
$k \leq$ &219 &216&217&217&217&115&120&100&83&101&74       \\ \hline
\multicolumn{12}{c}{}\\ \hline

$|D|$  &{\bf 35}&{\bf 39}&{\bf 40}&{\bf 43}&{\bf 47}&{\bf 51}&{\bf 52}&{\bf 55}&{\bf 56}&{\bf 59}&{\bf 67}          \\ \hline
$h$ &2&4&2&1&5&2&2&4&4&3&1 \\ \hline
$k \leq$  &86&67 &73&83&52&75&65&45&55&60&58      \\ \hline
\multicolumn{12}{c}{}\\ \hline

$|D|$  &{\bf 68}&{\bf 71}&{\bf 79}&{\bf 83}&{\bf 84}&{\bf 87}&{\bf 88}&{\bf 91}&{\bf 95}&{\bf 103}&{\bf 104}           \\ \hline
$h$ &4&7&5&3&4&6&2&2&8&5&6 \\ \hline
$k \leq$  &53&38&33&41&50&36&45&50&30&30&32           \\ \hline
\multicolumn{12}{c}{}\\ \hline

$|D|$  &{\bf 107}&{\bf 111}&{\bf 115}&{\bf 116}&{\bf 119}&{\bf 120}&{\bf 123}&{\bf 127}&{\bf 131}&{\bf 132}&{\bf 136}    \\ \hline
$h$ &3&8&2&6&10&4&2&5&5&4&4 \\ \hline
$k \leq$  &35&28&40&33&25&38&35&25&32&33&32                     \\ \hline
\multicolumn{12}{c}{}\\ \hline

$|D|$  &{\bf 139}&{\bf 143}&{\bf 148}&{\bf 151}&{\bf 152}&{\bf 155}&{\bf 159}&{\bf 163}&{\bf 164}&{\bf 167}&{\bf 168}      \\ \hline
$h$ &3&10&2&7&6&4&10&1&8&11&4 \\ \hline
$k \leq$  &29&20&31&21&24&26&19&33&24&18&26                       \\ \hline
\multicolumn{12}{c}{}\\ \hline

$|D|$  &{\bf 179}&{\bf 183}&{\bf 184}&{\bf 187}&{\bf 191}&{\bf 195}&{\bf 199}&{\bf 203}&{\bf 211}&{\bf 212}&{\bf 215}   \\ \hline
$h$ &5&8&4&2&13&4&9&4&3&6&14 \\ \hline
$k \leq$  &24&19&25&25&15&27&17&23&21&17&14  \\ \hline
\multicolumn{12}{c}{}\\ \hline

\end{tabular}
\caption{the scope of our computations, part 1(a)}\label{ours1}
\end{table}

\begin{table}[h] 
\centering
\begin{tabular}{|c|c|c|c|c|c|c|c|c|c|c|c|} \hline

$|D|$     &{\bf 219}&{\bf 223}&{\bf 227}&{\bf 228}&{\bf 231}&{\bf 232}&{\bf 235}&{\bf 239}&{\bf 244}&{\bf 247} &{\bf 248} \\ \hline
$h$ &4&7&5&4&12&2&2&15&6&6&8 \\ \hline          
$k \leq$  &20&14&17&18&13&21&23&12&17&13&16 \\ \hline       
\multicolumn{12}{c}{}\\ \hline

$|D|$  &{\bf 251}&{\bf 255}&{\bf 259}&{\bf 260}&{\bf 263}&{\bf 264}&{\bf 267}&{\bf 271}&{\bf 276}&{\bf 280}&{\bf 283}          \\ \hline
$h$ &7&12&4&8&13&8&2&11&8&4&3 \\ \hline
$k \leq$  &15&14&17&14&13&15&21&12&16&16&17                            \\ \hline
\multicolumn{12}{c}{}\\ \hline

$|D|$  &{\bf 287}&{\bf 291}&{\bf 292}&{\bf 295}&{\bf 296}&{\bf 299}&{\bf 303}&{\bf 307}&{\bf 308}&{\bf 311}&{\bf 312}     \\ \hline
$h$ &14&4&4&8&10&8&10&3&8&19&4 \\ \hline
$k \leq$  &12&19&16&13&13&13&11&15&13&11&13                 \\ \hline
\multicolumn{12}{c}{}\\ \hline

$|D|$  &{\bf 319}&{\bf 323}&{\bf 327}&{\bf 328}&{\bf 331}&{\bf 335}&{\bf 339}&{\bf 340}&{\bf 344}&{\bf 347}&{\bf 355}   \\ \hline
$h$ &10&4&12&4&3&18&6&4&10&5&4 \\ \hline
$k \leq$  &11&12&10&13&14&11&15&14&10&12&13                    \\ \hline
\multicolumn{12}{c}{}\\ \hline

$|D|$  &{\bf 356}&{\bf 359}&{\bf 367}&{\bf 371}&{\bf 372}&{\bf 376}&{\bf 379}&{\bf 383}&{\bf 388}&{\bf 391}&{\bf 395}  \\ \hline
$h$ &12&19&9&8&4&8&3&17&4&14&8 \\ \hline
$k \leq$  &11&9&11&10&12&12&13 &8&11&9&10                     \\ \hline
\multicolumn{12}{c}{}\\ \hline

$|D|$  &{\bf 399}&{\bf 403}&{\bf 404}&{\bf 407}&{\bf 408}&{\bf 411}&{\bf 415}&{\bf 419}&{\bf 420}&{\bf 424}&{\bf 427}    \\ \hline
$h$ &16&2&14&16&4&6&10&9&8&6&2 \\ \hline
$k \leq$  &8&12&9&8&10&12&10&12&11&10&13     \\ \hline
\end{tabular}
\caption{the scope of our computations, part 1(b)}\label{ours1b}
\end{table}

\begin{table}[h] 
\centering
\begin{tabular}{|c|c|c|c|c|c|c|c|c|c|} \hline

$|D|$ &{\bf 431}&{\bf 435}&{\bf 436}&{\bf 439}&{\bf 440}&{\bf 443}&{\bf 447}&{\bf 451}&{\bf 452}           \\ \hline
$h$ &21&4&6&15&12&5&14&6&8  \\ \hline
$k \leq$ &8&13&11&9&8&12&10&12&11                                          \\ \hline
\multicolumn{10}{c}{}\\ \hline

$|D|$  &{\bf 455}&{\bf 456}&{\bf 463}&{\bf 467}&{\bf 471}&{\bf 472}&{\bf 479}&{\bf 483}&{\bf 487}                              \\ \hline
$h$ &20&8&7&7&16&6&25&4&7 \\ \hline
$k \leq$   &7&10&8&10&7&11&7&11&8                                                                         \\ \hline
\multicolumn{10}{c}{}\\ \hline

$|D|$  &{\bf 488}&{\bf 491}&{\bf 499}&{\bf 520}&{\bf 532}&{\bf 547}&{\bf 555}&{\bf 560}&{\bf 568}                                      \\ \hline
$h$ &10&9&3&4&4&3&4&4&4 \\ \hline
$k \leq$  &9&10&10&9&10&11&11&7&10                                                                       \\ \hline
\multicolumn{10}{c}{}\\ \hline

$|D|$  &{\bf 571}&{\bf 595}&{\bf 627}&{\bf 643}&{\bf 667}  &{\bf 696} &{\bf 708}&{\bf 715}&{\bf 723}                                              \\ \hline
$h$ &5&4&4&3&4 &12  &4&4&4  \\ \hline
$k \leq$  &11&9&11&9&9&4  &7&7&9                                                                                     \\ \hline
\multicolumn{10}{c}{}\\ \hline

$|D|$  &{\bf 760}&{\bf 763} &{\bf 795} &{\bf 883}&{\bf 907}&{\bf 955}&{\bf 1003}&{\bf 1027}&{\bf 1051}       \\ \hline
$h$ &4&4&4&3&3&4&4&4&5 \\ \hline
$k \leq$  &7&7&7&6&7&6&6&5&5    \\ \hline
\multicolumn{10}{c}{}\\ \hline

$|D|$  &{\bf 1123}&{\bf 1227} &{\bf 1243}&{\bf 1387}&{\bf 1411}&{\bf 1507}&{\bf 1555}&{\bf 1723} & {\bf 1747}    \\ \hline
$h$    &5&4&4&4&4&4&4&5&5 \\ \hline
$k \leq$ &4 &5 &4&4&4&4&4&3&3   \\ \hline 
\multicolumn{10}{c}{}\\ \hline

$|D|$  &{\bf 1867}& &&&&&&&    \\ \hline
$h$    &5&&&&&&&& \\ \hline
$k \leq$ &3& &&&&&&&   \\ \hline

\end{tabular}
\caption{the scope of our computations, part 2}\label{ours2}
\end{table}

\begin{table}
\begin{tabular}{|c|c|c|c|c|c|c|c|c|c|c|} \hline
$|D|$     &{\bf 7}   &{\bf 11} &{\bf 71} &{\bf 87}  &{\bf 91}   &{\bf 155}    &{\bf 199}   &{\bf 223}    &{\bf 231} &{\bf 339}\\ \hline
$k$      &12 &10 &1  &2    &6     &4         &1      &0        &4   &1  \\ \hline
$\dim$ &2   &2   &2  &2    &2     &2        &4       &2        &2  &2  \\ \hline
\multicolumn{11}{c}{}\\ \hline

$|D|$       &{\bf 344}  &{\bf 407}  &{\bf 408}    &{\bf 408}    &{\bf 408}    &{\bf 415}     &{\bf 435}   &{\bf 435}   &{\bf 435} &{\bf 435}  \\ \hline
$k$           &1     &0      &2        &5        &8       &0         &2       &5   &8   &11 \\ \hline
$\dim$      &2     &2      &2        &2        &2       &2         &2       &2   &2  &2 \\ \hline 
\multicolumn{11}{c}{}\\ \hline

$|D|$      &{\bf 455}  &{\bf 483} &{\bf 571}  &{\bf 571}  &{\bf 643}   &{\bf 760} &{\bf 1003} &{\bf 1003 } &{\bf 1051}  & \\ \hline
$k$         &0      &1    &0      &1      &0      &2 &0 &1 &0       &    \\ \hline
$\dim$      &2      &2    &2     &2       &2     &2  &2 &2 &2       &   \\ \hline
\end{tabular}
\caption{the cases where there are {\em genuine} classes}\label{exceptional}
\end{table}

In Section \ref{section: varieties}, we briefly discuss the conjectural connections between the spaces $S_0(1)$ and Abelian varieties defined over $K$ of $\textrm{GL}_2$-type. In Section \ref{section: comments}, we make some speculations in light of the data we collect. In particular, we pose a question which can be seen as an analogue of Maeda's conjecture for Bianchi modular forms. Finally in Section \ref{section: algorithm}, we explain how we carried out our computations. As usual, the starting point of our approach is the so called ``Eichler-Shimura-Harder" isomorphism which allows us to replace $S_k(1)$ with the cohomology of the relevant Bianchi group with special non-trivial coefficients. Then to compute this cohomology space, we use the program \textit{Bianchi.gp} \cite{BianchiGP}, which analyzes the structure of the Bianchi group via its action on hyperbolic 3-space (which is isomorphic to the 
associated symmetric space $\sl_2(\C)/{\rm SU}_2$). We then feed this group-geometric information into an equivariant spectral sequence that gives us an explicit description of the second cohomology of the Bianchi group, with the relevant coefficients.

{\bf Acknowledgments.} We wish to thank John Cannon, \mbox{John Cremona} and Stephen S. Gelbart for useful discussions. 
We are grateful to Dan Yasaki who reworked his Magma program to compute for us the number field generated by the Hecke eigenvalues 
of the genuine weight 2 cuspidal Bianchi modular forms that we found for the field $\Q(\sqrt{-455})$. 
It is a pleasure to acknowledge our debt to Ulrich G\"ortz and Gabor Wiese for allowing us to 
use the computer nodes at the Institute for Experimental Mathematics of the University of Duisburg-Essen and 
the Mathematics Department of the University of Luxembourg respectively. 
Also, we are grateful to the Weizmann Institute of Science for providing the high performance computing clusters on which the database of cell complexes used for our calculations has been generated.
The second author thanks the Algebra and Geometry Group of the Mathematics Department of the University of Barcelona for the post-doctoral fellowship under which he carried out most of his work that went into this paper. Moreover, he thanks the Mathematical Sciences Research Institute of the University of California and the Max Planck Institute for Mathematics for the wonderful hospitality that he received during his stays. Finally, we would like to thank Frank Calegari, Lassina Demb\'el\'e for their comments on the paper and to Aurel Page for computing the Hecke action on the weight (1,1) cohomology for the field $\Q(\sqrt{-199})$.

\section{Background} \label{section: background}
Let $K$ be an imaginary quadratic field with ring of integers $\mathcal{O}$. Let $\Gamma$ be the Bianchi group $\sl_2(\O)$. 
It is a discrete subgroup of the real Lie group $\sl_2(\C)$ and thus acts discontinuously on hyperbolic 3-space. 
Let $Y_{\Gamma}$ be the quotient hyperbolic 3-fold. The Borel-Serre compactification, see \cite[appendix]{Serre}, $X_{\Gamma}$ of $Y_{\Gamma}$ is a compact 3-fold with boundary~$\partial X_{\Gamma}$ whose interior is homeomorphic to $Y_{\Gamma}$. When the discriminant of $K$ is smaller than $-4$, 
$\partial X_\Gamma$ consists of $h_K$ disjoint 2-tori where $h_K$ is the class number of $K$. 

Given $n \geq 0$, let $\C[x,y]_n$ denote the space of homogeneous polynomials of degree $n$ on variables $x,y$ with complex coefficients. 
$\sl_2(\C)$ acts on this space in the obvious way permitted by the two variables. Consider the $\sl_2(\C)$-module 
$$E_n:= \C[x,y]_n \otimes_{\C} \overline{\C[x,y]}_n$$ 
where the overline on the second factor is to indicate that the action on this factor is twisted with complex conjugation.  
When considered as a $\Gamma$-module, $E_n$ gives rise to a locally constant sheaf $\mathcal{E}_n$ on $Y_\Gamma$ whose stalks 
are isomorphic to $E_n$. Consider the long exact sequence
$$ \hdots \rightarrow H^i_{c}(Y_{\Gamma}; \thinspace \mathcal{E}_{n}) \rightarrow H^{i}(X_{\Gamma}; \thinspace \bar{\mathcal{E}}_{n}) \rightarrow 
H^{i}(\partial X_{\Gamma}; \thinspace \bar{\mathcal{E}}_{n}) \rightarrow \hdots $$
where $H^i_c$ denotes the compactly supported cohomology and $\bar{\mathcal{E}}_{n}$ is a certain natural extension of $\mathcal{E}_{n}$ 
to $X_\Gamma$.

The \textit{cuspidal cohomology} $H^i_{cusp}$ is defined as the image of the compactly supported cohomology. The \textit{Eisenstein cohomology} $H^i_{Eis}$ is the complement of the cuspidal cohomology inside $H^i$ and it is isomorphic to the image of the restriction map inside the cohomology of the boundary. The decomposition $H^i = H^i_{cusp}\oplus H^i_{Eis}$ respects the Hecke action which is defined, as usual, via correspondences on $X_{\Gamma}$.

By construction, the embedding $Y_{\Gamma} \hookrightarrow X_{\Gamma}$ is a homotopy invariance. Together with the fact that $Y_{\Gamma}$ is a 
$K(\Gamma,1)$-space, we get the isomorphisms
$$H^i(X_{\Gamma}; \thinspace \bar{\mathcal{E}}_{n}) \simeq H^i(Y_{\Gamma}; \thinspace  \mathcal{E}_{n}) \simeq H^i(\Gamma; \thinspace  E_{n}).$$
Via these isomorphisms, we define the cuspidal and Eisenstein parts of $H^i(\Gamma; \thinspace E_{n})$.

Let $S_{n}(1)$ denote the space of level one cuspidal Bianchi modular forms (over $K$) of weight $n+2$. It is well known that 
$$S_{n}(1) \simeq H^1_{cusp}(Y_\Gamma; \thinspace \mathcal{E}_{n}) \simeq H^2_{cusp}(Y_\Gamma; \thinspace \mathcal{E}_{n})$$
as Hecke modules. Here the first isomorphism was established by Harder and the second follows from duality, see \cite{ash-stevens}.

In \cite{fgt}, a formula for the dimension of the space $L_n(1)$ has been given for all fields $K$ and weights $n$. We will compare the dimension 
of $L_n(1)$, which we obtain via their formula, to the dimension of $S_n(1)$, which we will obtain via our computer programs. 
The following Proposition will allow us to deduce the size of the cuspidal cohomology, and hence of $S_n(1)$, once we have computed the size of the whole cohomology. It is well-known to the specialists, however for the convenience of the reader we include a proof of it.
 
\begin{proposition} \label{infinity} Let $K$ be an imaginary quadratic field. Then in the above notation
$$ \textrm{dim}~H^2_{Eis}(X_\Gamma; \thinspace \bar{\mathcal{E}}_{n}) = \begin{cases} h_K-1, \ \ \ {\rm if} \ \ n=0, \\ h_K, \ \  \ \ \ \ \ \  else. \end{cases}$$
where $h_K$ is the class number of $K$.
\end{proposition}

\begin{proof} It is well-known (see Theorem 2.1 of \cite{harder-75}) that the map
$$H^2(X_{\Gamma}; \thinspace \bar{\mathcal{E}}_{n}) \longrightarrow H^2(\partial X_{\Gamma}; \thinspace \bar{\mathcal{E}}_{n})$$
is surjective for $n>0$ and its image has codimension one for $n=0$. 

Assume that the discriminant of $K$ is less than $-4$, that is, 
$K$ is not equal to $\Q(i)$ nor $\Q(\sqrt{-3})$. Then the boundary $\partial X_{\Gamma}$ is a disjoint union 
of 2-tori, indexed by the class group of $K$. Below we prove that the dimension of 
$H^2(T; \thinspace \bar{\mathcal{E}}_{n})$ is one for every boundary component $T$ of $\partial X_\Gamma$, which clearly 
gives our claim.

Let $c \in K \cup \{\infty\}$ be a cusp and let $\Gamma_c$ be its stabilizer in $\Gamma$ (which is a parabolic subgroup). Then $\Gamma_c$ is the fundamental group of $T_c$. In fact, $T_c$ is a $K(\Gamma_c,1)$-space. Hence we can turn our attention to computing $H^2(\Gamma_c; \thinspace E_{n})$. 
Composition of the cup product and the well-known perfect pairing $(\cdot, \cdot) : E_{n} \otimes_{\C} E_{n} \rightarrow \C$ (see, for example, Section 2.4. \cite{berger}) gives us a pairing 

$$\xymatrix{ H^0(\Gamma_c; \thinspace  E_{n}) \times H^2(\Gamma_c; \thinspace  E_{n}) \ar[r]^{\ \ \ \ \ \cup} & 
             H^2(\Gamma_c; \thinspace E_{n} \otimes_{\C} E_{n}) \ar[d]^{(\cdot, \cdot)}  \\ 
            &  H^2(\Gamma_c; \thinspace  \C ) \simeq \C.}$$
Here the last isomorphism follows from the fact that $T_c$ is a compact 2-fold (see also proof of Prop.3.5. of \cite{sengun} for 
a direct algebraic argument). Thus the dimension we are looking for is equal to that of $H^0(\Gamma_c; \thinspace  E_{n})$. 
Clearly, if $n=0$, the latter dimension is 1 and thus the dimension of $H^2(\partial X_{\Gamma}; \thinspace \bar{\mathcal{E}}_{n})$ is 
$h_K$ as desired.

Let us now assume that $n \not=0$. Conjugation by a matrix in $\sl(K)$ which takes $c$ to the cusp at infinity induces an isomorphism 
$$\Gamma_c  \simeq \Gamma_\infty = ( \begin{smallmatrix} * & *  \\ 0 & * \\ \end{smallmatrix} ) \subset \sl_2(\mathcal{O}_K).$$
 Consider the normal subgroup 
$\Gamma^+_\infty := ( \begin{smallmatrix} 1 & *  \\ 0 & 1 \\ \end{smallmatrix} )$ of $\Gamma_\infty$. 
 Then $\Gamma^+_\infty$ is a free Abelian group on two generators. We are now going to determine the submodule $E_{n}^{\Gamma^+_\infty}$ of $E_n$  invariant under its action. As the generators are of the form 
$( \begin{smallmatrix} 1 & *  \\ 0 & 1 \\ \end{smallmatrix} )$, it is clear that the vector $x^n \otimes x^n$ is fixed by $\Gamma^+_\infty$.
One shows, proceeding as in Lemma 2.4. of \cite{wiese}, that there are no other fixed vectors. Hence 
$$H^0(\Gamma_\infty^+; \thinspace E_{n}) = E_{n}^{\Gamma^+_\infty}= \langle x^n \otimes x^n \rangle$$is of complex dimension one. Let $\mu := \Gamma_\infty / \Gamma_\infty^+ = \big \{ ( \begin{smallmatrix} \pm 1 & 0 \\ 0 & \pm 1 \\ \end{smallmatrix} )  \big \} $.  As we are considering modules over $\C$, it follows that 
 $$H^0(\Gamma_\infty; \thinspace E_{n}) \simeq H^0(\Gamma_\infty^+; \thinspace E_{n})^{\mu}$$ 
 is the invariant submodule under $\mu$. We easily check that the action of $\mu$ on~$E_{n}$ is trivial, and so 
 $$H^0(\Gamma_c; \thinspace E_{n}) \simeq H^0(\Gamma_\infty^+; \thinspace E_{n})$$ is again of complex dimension one. This completes the proof 
with our assumption of the discriminant of $K$.

When $K$ is $\Q(i)$ or $\Q(\sqrt{-3})$, due to the extra units, the cross-sections of the cusps, which are again parametrized by the class group, are 2-orbifolds whose underlying manifolds are $2$-spheres (torus folded by an involution). As the second cohomology of the $2$-sphere is one dimensional, the result follows.
\end{proof}


\section{Abelian varieties of \textrm{GL}(2)-type} \label{section: varieties}

There is a widely believed conjectural connection between Bianchi newforms of weight 2 over $K$ and Abelian varieties of $\textrm{GL}_2$-type defined over $K$ (see \cite{egm82},\cite{Cremona92},\cite{taylor}) which is expressed in terms of the associated $L$-functions. In particular, an Abelian variety of $\textrm{GL}_2$-type over $K$, that is not definable over $\Q$ nor of $CM$-type, with everywhere good reduction is expected to give rise to newforms in $S_0(1)^+$. Here $S_0(1)^+$ denotes the {\em plus-subspace} of $S_0(1)$ in the sense of \cite{egm82} and \cite{Cremona}. Equivalently, $S_0(1)^+$ can be seen as the space of cuspidal Bianchi modular forms of weight two for $\gl_2(\O_K)$.

In the reverse direction, the newforms in $S_0(1)^+$ are expected\footnote{There are some natural exceptions coming from elliptic newforms with {\it inner twists}, see 
Cremona \cite{Cremona92}, which are avoided if we consider newforms that are not in $L_0(1)$.} to correspond to Abelian varieties of $\textrm{GL}_2$-type over $K$ which have everywhere good reduction. As listed in Table \ref{exceptional}, we have found eight imaginary quadratic fields for which $S_0(1)$ contained non-lifted classes. For only six of these fields, the non-lifted classes were in fact contained in $S_0(1)^+$. In Table \ref{hecke} below, we list the (necessarily totally real) number field $F$ generated by the Hecke eigenvalues of the non-lifted newforms in these six cases.

\begin{table}[h]
\begin{tabular}{|c|c|c|c|c|c|c|} \hline
$|D|$     &{\bf 223} &{\bf 415}  &{\bf 455} &{\bf 571}  &{\bf 643}  &{\bf 1003}  \\ \hline
$F$        &$\Q(\sqrt{2})$ &$\Q(\sqrt{3})$  & $\Q(\sqrt{5})$ & $\Q(\sqrt{5})$ &$\Q$ & $\Q(\sqrt{7})$  \\ \hline
\end{tabular}
\caption{the number field generated by the Hecke eigenvalues of non-lifted newforms in $S_0(1)^+$}\label{hecke}
\end{table}

We have computed these fields using Dan Yasaki's program, see \cite{Yasaki}, in Magma which computes the Hecke action on $S_0(\Gamma_0(\mathfrak{n}))^+$ 
for congruence subgroups of type $\Gamma_0(\mathfrak{n})$ of  Bianchi groups. Note that as this program only treats 
$GL_2$-cohomology with trivial weight, that is $k=0$, we could not have used it for our experiment.
 
Table \ref{hecke} tells us that there should exist an elliptic curve defined over $\Q(\sqrt{-643})$, and not over $\Q$, which has everywhere good reduction and it should be {\em modular}. Indeed we know by Kr\"amer \cite{Kraemer} that there is such an elliptic curve over $\Q(\sqrt{-643})$ and it does seem to be modular, see Scheutzow \cite{scheutzow}. 
Similarly, there should exist Abelian surfaces defined over $\Q(\sqrt{-d})$ with $d=223,415,455,571,1003$ and not over $\Q$, which have everywhere good reduction and real multiplication by $\sqrt{2},\sqrt{3},\sqrt{5},\sqrt{5},\sqrt{7}$ respectively and they should be modular. Locating such surfaces is a highly nontrivial task, see \cite[Section 8]{sengun-survey}.


\section{Comments}  \label{section: comments}

The data collected in this paper make it clear that the spaces of cuspidal Bianchi modular forms of level one are generically made of forms which are not genuine. 
Unfortunately the data are not enough to formulate a quantitative statement about the occurrences of non-lifted forms. Hence the following question remains.

\begin{question} Let $K$ be an imaginary quadratic field. Let $S_k(1)$ denote the 
space of level one cuspidal Bianchi modular forms over $K$ of weight $k+2$. Is it true that there are at most finitely many $k$ for which the space 
$S_k(1)$ contains non-lifted forms ?
\end{question} 

Let us make a comparison with other types of modular forms. 
For the case of Hilbert modular forms (this the case of the algebraic group $\textrm{Res}_{F/ \Q}(\sl_2)$ where $F$ is a totally real field) and 
Siegel modular forms of genus 2 (the case of the algebraic group $\textrm{Sp}_4$), one has considerable amount of genuine level one cuspidal forms. 
However, the case of the modular forms for $\textrm{SL}_3$ is similar to our case. Here in the range of the data collected by Ash and Pollack, see \cite{ash-pollack}, the spaces of level one modular forms for $\textrm{SL}_3$ are completely made of those which are the symmetric square lifts of classical holomorphic modular forms. They in fact conjecture that this is always the case.

It is interesting to note that for the Hilbert and Siegel modular forms of genus 2, where we have plenty of genuine forms, the associated symmetric 
spaces are Hermitian, while for the Bianchi and $\textrm{SL}_3$ modular forms, where there is an extreme paucity of genuine forms, the 
associated symmetric spaces fail to be Hermitian. Is this part of a general phenomenon ?

Next we shall pose a question about the non-lifted newforms in $S_k(1)$ inspired by classical Maeda's conjecture. 
The nontrivial automorphism $\sigma \in \textrm{Gal}(K / \Q)$ of $K$ acts on the set of newforms in $S_k(1) $ as an involution, again denoted $\sigma$. 
Thus for every newform $f$ has a {\em twin}, denoted ${}^{\sigma}f$. The Hecke eigenvalues $c(\cdot,\pi)$ associated to the Hecke 
operators\footnote{Observe that since we are not working within the adelic setting, we only consider Hecke operators which stabilize the connected components 
of the adelic symmetric space.} $T_{\pi}$ satisfy the relation $$c({}^{\sigma}f,\pi)=c(f,\sigma(\pi))$$ 
for every $\pi \in \O$. Recall that just as in the case of elliptic modular forms, for a newform $f$ in $S_k(1)$ with Hecke eigenvalue field $F$, there is a newform $f^{\tau}$ in $S_k(1)$ for every $\tau \in \textrm{Gal}(F / \Q)$ with the property that $c(f^{\tau},\pi)=\tau(c(f,\pi))$ for every $\pi \in \O$. We say that $f$ and the $f^{\tau}$ form one {\em Galois orbit}.
 
\begin{question} \label{maeda} Let $K$ be an imaginary quadratic field. Is it true that for every $k \geq 0$, the set of non-lifted newforms in $S_k(1)$, modulo the action of $\textrm{Gal}(K / \Q)$, forms one Galois orbit ?
\end{question} 

In all except one of the cases where we observed non-lifted newforms, the dimension of the non-lifted subspace was only two. In this case, 
the answer to Question \ref{maeda} is automatically {\em yes} as the two non-lifted newforms have to be twins (that is, the Galois conjugate of the newform is equal to its twin). Aurel Page kindly computed the Hecke action (based on the methods of \cite{page}) on the non-lifted classes for the case $(199,1)$ for us and his data show that the answer to Question \ref{maeda} is {\em yes} in this case as well. More precisely, we have a pair of Galois conjugate non-lifted newforms with coefficients in $\Q(\sqrt{13})$ and their twins, forming the four dimensional non-lifted subspace. 

As Frank Calegari remarked to us, if there are two elliptic curves defined over $K$ with good reduction everywhere and such that neither come from $\Q$ nor are conjugates of each other, then the answer to Question \ref{maeda} would be {\em no} for $S_2(1)$. Note that the analogue of this conjecture for Hilbert modular forms over real quadratic fields holds in the range of the computations performed by Doi and Ishii, see \cite{doi-hida-ishii} p.568.


\section{Method of the computations} \label{section: algorithm}
In this section, we will explain how we computed the cohomology of the investigated Bianchi groups.

Let $m$ be a square-free positive integer and $K = \rationals(\sqrt{-m}\thinspace)$
 be an imaginary quadratic number field with ring of integers $\mathcal{O}_{-m}$, which we also just denote by $\mathcal{O}$. 
Consider the familiar action (we give an explicit formula for it in lemma \ref{operationFormula}) of the group 
\mbox{$\Gamma := \mathrm{SL_2}(\mathcal{O}) \subset \mathrm{GL}_2(\C)$} on hyperbolic three-space,
 for which we will use the upper-half space model $\Hy$. 
 
As a set, $$ \Hy = \{ (z,\zeta) \in \C \times \R \medspace | \medspace \zeta > 0 \}. $$

\begin{lemma}[Poincar\'e]  \label{operationFormula}
If $\gamma = $\scriptsize$ \mat $\normalsize$ \in \mathrm{GL}_2(\C)$, the action of $\gamma$ on $\Hy$ is given by $\gamma \cdot (z,\zeta) = (z',\zeta')$, where
$$ \zeta' = \frac{|\det \gamma|\zeta}{|cz-d|^2 +\zeta^2|c|^2},
\qquad z' = \frac{\left(\thinspace\overline{d -cz}\thinspace\right)(az-b) -\zeta^2\bar{c}a}{|cz-d|^2 +\zeta^2|c|^2}.$$
\end{lemma}

The Bianchi--Humbert theory \cite{Bianchi}, \cite{Humbert} gives a fundamental domain for the action of $\Gamma$ on $\Hy$,
 which we shall call the \emph{Bianchi fundamental polyhedron}.
It is a polyhedron in hyperbolic space up to the missing vertex~$\infty$,
and up to a missing vertex for each non-trivial ideal class if $\ringO_{-m}$ is not a principal ideal domain.
We observe the following notion of strictness of the fundamental domain:
 the interior of the Bianchi fundamental polyhedron contains no two points which are identified by~$\Gamma$.
Swan~\cite{Swan}
 proves a theorem which implies that the boundary of the Bianchi fundamental polyhedron consists of finitely many cells.

\subsection{A cell complex for the Bianchi groups} \label{refined cell complex}
Swan further produces a concept for an algorithm to compute the Bianchi fundamental polyhedron.
Such an algorithm has been implemented by Cremona~\cite{Cremona}
 for the five cases where~$\ringO_{-m}$ is Euclidean, and by his students Whitley~\cite{Whitley}
 for the non-Euclidean principal ideal domain cases, Bygott~\cite{Bygott} for a case of class number 2 and Lingham (\cite{Lingham},
 used in~\cite{CremonaLingham}) for some cases of class number 3; and finally Aran\'es~\cite{Aranes} for arbitrary class numbers.
Another algorithm based on this concept has independently implemented in~\cite{BianchiGP}
 for all Bianchi groups; and we make explicit use of the cell complexes it produces.
Other results of the employed implementation are described in~\cite{RahmThesis}.

\begin{definition}
A pair of elements $(\mu, \lambda) \in \mathcal{O}^2$ is called \emph{unimodular} if the ideal sum $\mu \mathcal{O} +\lambda \mathcal{O}$ equals $\mathcal{O}$.
\end{definition}
The boundary of $\Hy$ is the Riemann sphere $\partial \Hy = \C \cup \{ \infty \}$ (as a set), which contains the complex plane $\C$.
The totally geodesic surfaces in $\Hy$ are the Euclidean vertical planes (we define \emph{vertical} as orthogonal to the complex plane) and the Euclidean hemispheres centered on the complex plane.
\begin{notation} \label{hemispheres}
Given a unimodular pair $(\mu$, $\lambda) \in \mathcal{O}^2$ with $\mu \neq 0$, let $S_{\mu,\lambda} \subset \Hy$ denote the hemisphere given by the equation $|\mu z -\lambda|^2 +|\mu|^2\zeta^2 = 1$.

This hemisphere has centre $\lambda/\mu$ on the complex plane $\C$, and radius $1/|\mu|$.
\label{B}
Let 
\\ $
B:=\bigl\{(z,\zeta) \in \Hy$:
The inequality $|\mu z -\lambda|^2 +|\mu|^2\zeta^2 \geq 1$ 
\begin{flushright} is fulfilled for all unimodular pairs $(\mu$, $\lambda) \in \mathcal{O}^2$ with $\mu \neq 0$ $\bigr\}$.
\end{flushright}
Then $B$ is the set of points in $\Hy$ which lie above or on all hemispheres $S_{\mu,\lambda}$.
\end{notation}

\begin{lemma}[\cite{Swan}] \label{GammaBequalsH}
The set $B$ contains representatives for all the orbits of points under the action of $\mathrm{SL_2}(\mathcal{O})$ on $\Hy$.
\end{lemma}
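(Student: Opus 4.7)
The plan is to show that every $\sl_2(\O)$-orbit in $\Hy$ contains a point of maximal height, and that such points are precisely the points of $B$. Using the standard formula for the action on the upper half space, the height of $g\cdot(z,\zeta)$ for $g=\mat\in\sl_2(\O)$ equals $\zeta/(|cz+d|^2+|c|^2\zeta^2)$. The pairs $(c,d)$ arising as bottom rows of elements of $\sl_2(\O)$ are exactly the unimodular pairs: the identity $ad-bc=1$ forces $c\O+d\O=\O$, and conversely any unimodular pair admits a completion to such a matrix. Writing $(\mu,\lambda)=(c,-d)$, the hemisphere inequality $|\mu z-\lambda|^2+|\mu|^2\zeta^2\ge 1$ defining $B$ is precisely the statement that $g\cdot(z,\zeta)$ has height at most $\zeta$; thus $P\in B$ if and only if $P$ realises the supremum of heights in its $\sl_2(\O)$-orbit.

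Next I would show that this supremum is actually attained. Fix $P=(z,\zeta)\in\Hy$ and consider the denominators $D(\mu,\lambda):=|\mu z-\lambda|^2+|\mu|^2\zeta^2$ as $(\mu,\lambda)$ ranges over unimodular pairs with $\mu\ne 0$. For any bound $T>0$, the inequality $|\mu|^2\zeta^2\le T$ confines $\mu$ to a bounded region of the discrete lattice $\O\subset\C$ and hence to finitely many values; for each such $\mu$, the inequality $|\mu z-\lambda|^2\le T$ restricts $\lambda$ to finitely many values by the same argument. Hence only finitely many of the $D(\mu,\lambda)$ lie below any given threshold, and the infimum is attained by some pair $(\mu_0,\lambda_0)$.

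Finally, choose $g_0\in\sl_2(\O)$ with bottom row $(\mu_0,-\lambda_0)$; this element maximises the height among translates of $P$ by matrices with nonzero lower-left entry. Matrices with $c=0$ have diagonal entries in $\O^{\times}$, all of complex absolute value one in an imaginary quadratic ring, so they preserve the height; thus $g_0\cdot P$ maximises the height over the entire orbit. If $g_0\cdot P$ were not in $B$, some unimodular pair $(\mu,\lambda)$ would violate the hemisphere inequality there, and acting by the corresponding element of $\sl_2(\O)$ would strictly increase the height, a contradiction. The only non-formal step, and hence the main obstacle, is the finiteness argument of the second paragraph, which crucially exploits both summands of $D(\mu,\lambda)$ to pin down $\mu$ first and then $\lambda$.
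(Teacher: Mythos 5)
Your argument is correct, but it reaches the conclusion by a genuinely different (and more self-contained) route than the paper. The paper's proof identifies $\Hy$ with the space of positive definite binary Hermitian forms modulo homotheties, observes that $B$ corresponds to the forms taking their ``proper minimum'' at the argument $(1,0)$, and then simply cites Humbert's reduction theorem, which asserts that every such form can be moved by an element of $\mathrm{SL}_2(\O)$ into that normalised position; the key existence statement is thus outsourced to \cite{Humbert}. You instead work directly in the upper half-space model: you characterise $B$ as the set of points of maximal height within their orbit, using that the bottom rows of elements of $\mathrm{SL}_2(\O)$ are exactly the unimodular pairs and that matrices with $c=0$ preserve the height because the units of $\O$ have absolute value one, and you then prove that the supremum of heights is attained via the discreteness of the lattice $\O\subset\C$, which guarantees that only finitely many unimodular pairs make the denominator $|\mu z-\lambda|^2+|\mu|^2\zeta^2$ fall below any given threshold. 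That finiteness step, which you correctly flag as the crux, is precisely the content hidden inside Humbert's theorem: the proper minimum of the Hermitian form attached to $(z,\zeta)$ is the minimal value of that denominator, i.e.\ the reciprocal of the maximal height in the orbit. So the two proofs are the same mathematics in two languages, with yours supplying the elementary existence argument that the paper takes on faith from the classical literature. The only cosmetic discrepancy is a sign convention: the paper's Lemma~\ref{operationFormula} writes the denominator as $|cz-d|^2+\zeta^2|c|^2$, so there the pair playing the role of $(\mu,\lambda)$ is $(c,d)$ rather than your $(c,-d)$; this is immaterial since a pair is unimodular if and only if its sign-flipped version is.
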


The action  extends continuously to the boundary \mbox{$\partial \Hy$}, which is a Riemann sphere. \\
In $\Gamma := \mathrm{SL_2}(\mathcal{O}_{-m})$, consider the stabiliser subgroup $\Gamma_\infty$ of the point $\infty \in \partial \Hy$. 
Excluding the two cases $m =1$ and $m=3$ of Gaussian and Eisenstein integers, the latter group  is given as
$$ \Gamma_\infty = \left\{ \pm \begin{pmatrix}1 & \lambda \\ 0 & 1 \end{pmatrix} \thinspace | \medspace \lambda \in \mathcal{O} \right\},$$
which performs translations by the elements of $\ringO$ with respect to the Euclidean geometry of the upper-half space $\Hy$.\\

\begin{notation} \label{fundamentalRectangle}
A fundamental domain for $\Gamma_\infty$ in the complex plane (as a subset of $\partial \Hy$) is given by the rectangle
$$ D_0 := \begin{cases} \{ x +y\sqrt{-m} \in  \C  \medspace | \medspace 0 \leq x \leq 1, \medspace 0 \leq y \leq 1 \}, &
m \equiv 1 \medspace \mathrm{ or } \medspace 2 \mod 4, \\
\{ x +y\sqrt{-m} \in  \C \medspace | \medspace \frac{-1}{2} \leq x \leq \frac{1}{2}, \medspace 0 \leq y \leq \frac{1}{2} \}, &
m \equiv 3 \mod 4.
\end{cases}$$ 

And a fundamental domain for $\Gamma_\infty$ in $\Hy$ is given by
$$ D_\infty := \{ (z, \zeta) \in \Hy \medspace | \medspace z \in D_0 \}.$$
\end{notation}

\begin{definition}
We define the \emph{Bianchi fundamental polyhedron} as 
$$D := D_\infty \cap B.$$
\end{definition}

We can check that the computed polyhedron is indeed a fundamental domain for~$\Gamma$
 using the following observation of Poincar\'e~\cite{Poincare}:
 After a cell subdivision which makes the cell stabilizers fix the cells point-wise,
 the 2-cells (``faces'') of the fundamental polyhedron appear in pairs $(\sigma, \gamma \cdot \sigma)$ with $\gamma \in \Gamma$
 --- so for every orbit of faces, we have exactly two representatives ---
 such that with the orientation for which the lower side of the face $\sigma$ lies on the polyhedron,
 the upper side of~$\gamma \cdot \sigma$ lies on the polyhedron.
We induce a cell structure on~$\Hy$ by the images under~$\Gamma$ of the faces,
 edges and vertices of the Bianchi fundamental polyhedron.

\subsection{The Fl\"oge cellular complex}

In order to obtain a cell complex with compact quotient space, we proceed in the following way due to Fl\"oge \cite{Floege}.
The boundary of $\Hy$ is the Riemann sphere~$\partial \Hy$, which, as a topological space, is made up of the complex plane $\C$ compactified with the cusp $\infty$.
The totally geodesic surfaces in $\Hy$ are the Euclidean vertical planes (we define \emph{vertical} as orthogonal to the complex plane) and the Euclidean hemispheres centred on the complex plane.
The action of the Bianchi groups extends continuously to the boundary \mbox{$\partial \Hy$}. 
The cellular closure of the refined cell complex in $\Hy \cup \partial \Hy $ consists of \mbox{$ \Hy$} and 
\mbox{$\left(\rationals (\sqrt{-m}) \cup \{\infty\}\right) \subset \left(\C \cup \{\infty\}\right) \cong \partial \Hy$.} The $\mathrm{SL_2}(\ringO_{-m})$--orbit of a cusp $\frac{\lambda}{\mu}$ in $\left(\rationals (\sqrt{-m}) \cup \{\infty\}\right)$ corresponds to the ideal class $[(\lambda, \mu)]$ of $\ringO_{-m}$. It is well-known that this does not depend on the choice of the representative $\frac{\lambda}{\mu}$.
We extend the refined cell complex to a cell complex $\widetilde{X}$ by joining to it, in the case that $\ringO_{-m}$ is not a principal ideal domain, the $\mathrm{SL_2}(\ringO_{-m})$--orbits  of the cusps $\frac{\lambda}{\mu}$ for which the ideal $(\lambda, \mu)$ is not principal.
We call the latter cusps the \emph{singular} cusps. 
At the singular cusps, we equip $\widetilde{X}$ with the ``horoball topology'' described in~\cite{Floege}. This simply means that the set of cusps, which is discrete in \mbox{$\partial \Hy$}, is located at the hyperbolic extremities of $\widetilde{X}$ : No neighbourhood of a cusp, except the whole $\widetilde{X}$, contains any other cusp.

We retract $\widetilde{X}$ in the following, $\mathrm{SL_2}(\ringO_{-m})$--equivariant, way.
On the Bianchi fundamental polyhedron, the retraction is given by the vertical projection (away from the cusp $\infty$) onto its facets which are closed in $\Hy \cup \partial \Hy$. The latter are the facets which do not touch the cusp $\infty$, and are the bottom facets with respect to our vertical direction. The retraction is continued on $\Hy$ by the group action. It is proven in \cite{FloegePhD} that this retraction is continuous.
We call the retract of $\widetilde{X}$ the \emph{Fl\"oge cellular complex} and denote it by $X$.
So in the principal ideal domain cases, $X$ is a retract of the refined cell complex, obtained by contracting the Bianchi fundamental polyhedron onto its cells which do not touch the boundary of $\Hy$.  
In \cite{RahmFuchs}, it is checked that the Fl\"oge cellular complex is contractible.
Further details about the Fl\"oge cellular complex and homological computations with it are described in \cite{higher-torsion}.

\subsection{The spectral sequence} \label{spectral}
Let $X$ be our Fl\"oge complex constructed as above.
Next we will consider the spectral sequence associated to the double complex $\textrm{Hom}_{\mathbb Z\Gamma}(\Theta_*,C^*_{\mathbb Z}(X,M))$,
where $\Theta_*$ is the standard resolution of $\mathbb Z$ over $\mathbb Z\Gamma$ and
$C^*(X,M)$ is the cellular co-chain complex of $X$ with $\mathbb Z\Gamma$-module
coefficients $M$. We can (see \cite{Brown}, p. 164) derive the first-quadrant spectral sequence
$$\begin{displaystyle} E^{p,q}_1(M)=  \bigoplus_{\sigma \in \Sigma_p} H^q(\Gamma_{\sigma}; \thinspace  M)\Longrightarrow H^{p+q}(\Gamma; \thinspace  M) \end{displaystyle}$$
where $\Sigma_p$ denotes the $\Gamma$-conjugacy classes of $p$-cells of $X$. Observe that $\Gamma_{\sigma}$ will be a finite group whose order
 is divisible only by 2 and/or 3 unless $\sigma$ is the class of a singular cusp, in which case $\Gamma_{\sigma}$ is a free Abelian group 
on two unipotent generators. 

Assume that $M$ admits an additional module structure over a ring where~$6$ is invertible (in fact we are interested in the case where $M$ is a complex vector space). Then the finite ones among the higher cohomology groups of the $\Gamma_{\sigma}$ vanish. 
Thus, when there are no singular cusps (equivalently, when the class number of $\ringO$ is one), the spectral sequence concentrates on the row $q=0$ and stabilizes on the $E^2$-page. Otherwise, the spectral sequence concentrates on the rows $q=0,1,2$ and stabilizes at the $E^3$-page. 

As we shall see below, the dimension of the module $H^2(\Gamma; \thinspace M)$, which we want to determine, is the same as the dimension of
$$ E_2^{2,0} \simeq E_1^{2,0} / \textrm{Im}(d_1^{1,0}),$$
where the differential $d_1^{1,0}$ is between 
$$\begin{displaystyle} E_1^{1,0} \simeq \bigoplus_{\sigma \in \Sigma_1} M^{\Gamma_{\sigma}} \longrightarrow \bigoplus_{\sigma \in \Sigma_2} M \simeq E_1^{2,0}. \end{displaystyle} $$
The abutment of the spectral sequence gives us
$$ H^2(\Gamma; \thinspace M) \simeq E_3^{2,0} \oplus E_3^{0,2}.$$
Here $E_3^{0,2} \simeq \bigoplus_s H^2(\Gamma_s; \thinspace M)$ where the summation is over $\Gamma$-classes of singular cusps $s$. 

Moreover, 
$E_3^{2,0} = E_2^{2,0} / \textrm{Im}(d_2^{0,1})$ where the differential $d_2^{0,1}$ is between 
$$\begin{displaystyle} \bigoplus_{s \thinspace {\rm singular}} H^1(\Gamma_s; \thinspace M) \longrightarrow E_2^{2,0} .\end{displaystyle} $$

We determine the rank of this differential as follows.

\begin{theorem}[Th\'eor\`eme 8 \cite{Serre}]
Suppose that the coefficient module $M$ is equipped with a non-degenerate $\Gamma$-invariant  
$\C$-bilinear form.
Then the rank of the map from
$H^1(\Gamma; \thinspace  M)$ to the disjoint sum of the $H^1(\Gamma_s; \thinspace  M)$,
induced by restriction from
$H^1(\Gamma; \thinspace  M)$ to $H^1(\Gamma_s; \thinspace  M)$,
equals half of the rank of the disjoint sum of the $H^1(\Gamma_s; \thinspace  M)$.
\end{theorem}
The local topology of this map is studied in~\cite{RahmNoteAuxCRAS}.
The image of this restriction-induced map can be identified with the image of the epimorphism in the short exact sequence
of the spectral sequence's d\'{e}vissage,
$$ 0 \to E_2^{1,0} \longrightarrow H^1(\Gamma; \thinspace M) \longrightarrow {\rm ker} d_2^{0,1} \to 0.$$
Let us assume from now on that $M=E_n$ for some $n$. As we have seen in the proof of Proposition \ref{infinity}, there is a perfect pairing on $M$, which is a non-degenerate $\Gamma$-invariant $\C$-bilinear form. So the theorem of Serre applies, and we obtain the following corollary.
Note for this purpose that the proof of Proposition \ref{infinity} shows that 
$$\dim H^0(\Gamma_s; \thinspace M)= \dim H^2(\Gamma_s; \thinspace M)=1.$$ 
When the cross-section of the cusp $s$ is a torus, we have  
$$\dim H^1(\Gamma_s; \thinspace M)=2\cdot \dim H^2(\Gamma_s; \thinspace M)=2.$$ 
In the cases when $K$ is $\Q(i)$ or $\Q(\sqrt{-3})$, we have 
$$\dim H^1(\Gamma_s; \thinspace M)=0.$$

\begin{corollary} The rank of the differential 
$$\begin{displaystyle} d_2^{0,1}: \bigoplus_{s \thinspace {\rm singular}} H^1(\Gamma_s; \thinspace M) \longrightarrow  E_2^{2,0} \end{displaystyle} $$
is the number of non-trivial ideal classes.
\end{corollary}

\begin{remark} \label{equality of dimensions}
The above discussion implies that
 $$H^2(\Gamma; \thinspace M) \simeq \left ( \bigoplus_{s \thinspace {\rm singular}} H^2(\Gamma_s; \thinspace M) \right ) \oplus \left ( E_2^{2,0} / \textrm{Im}(d_2^{0,1})\right ), $$
and the dimension of $H^2(\Gamma; \thinspace M)$ is the same as that of $E_2^{2,0}$.
\end{remark}

\subsection{The procedure of the computations} 
We compute the representatives of faces in $E_1^{2,0}$ and the differential $d_1^{1,0}$
 of our equivariant spectral sequence with trivial integer coefficients with the program \textit{Bianchi.gp} \cite{BianchiGP}.
 The second author has implemented a MAGMA script that uses the cell stabilizers and identifications obtained with \textit{Bianchi.gp} 
 to compute the action on the coefficient module $M$ that we are interested in.
 We then deduce the term  $E_1^{2,0}$ and the differential $d_1^{1,0}$ with respect to our coefficients.
 The quotient  $$E_2^{2,0} \simeq E_1^{2,0} / \textrm{Im}(d_1^{1,0})$$
now admits the dimension of $H^2(\Gamma; \thinspace M)$
by Remark~\ref{equality of dimensions}.

 As linear algebra over number fields is more expensive 
compared to working over finite fields, we employ the following shortcut. Recall that by the universal coefficients theorem, the dimension of
$H^2(\Gamma; \thinspace M(\F_p))$ (``the mod $p$ dimension")
 is greater than or equal to the dimension of $H^2(\Gamma; \thinspace M(\C))$
 (``the complex dimension''). We start with computing the mod $p$-dimensions for primes $p \leq 200$.
 If we find for a particular $p$ for which the mod $p$
 dimension is equal to the lower bound of Finis-Grunewald-Tirao then we infer that the complex dimension is equal to the mod $p$
 dimension. Note that by Prop. 3.2 (d) of \cite{sengun}, this implies that $H^2(\Gamma; \thinspace M(\mathcal{O}))$
 has no $p$-torsion.
 If this is not the case for the primes in our range, then we compute the complex dimension directly by computing
 $H^2(\Gamma; \thinspace M(K))$. 

\subsection{Execution of the computations}
We applied the above described computations to a database of cell complexes for $186$ Bianchi groups,
 which has been established on the computing clusters of the Weizmann Institute of Science,
 using over fifty processor-months.
This database includes all the cases of ideal class numbers 3 and 5, most of the cases of ideal class number 4 
and {\em all} cases with the absolute value of the discriminant less than 500. 
Almost all of our dimension computations were carried out using the nodes of the computer clusters at the Universities of Duisburg-Essen and Luxembourg.
\bibliographystyle{amsalpha}
\bibliography{literature_2}

\end{document}